\documentclass[article,12pt,reqno]{amsart}
\usepackage[english]{babel}
\usepackage[utf8]{inputenc}
\usepackage{amsmath, amssymb, amsthm}
\usepackage{graphics}
\usepackage[]{todonotes}
\usepackage{physics}
\usepackage{centernot}
\usepackage{comment}
\usepackage[cm]{fullpage}

\usepackage[hidelinks]{hyperref}

\usepackage{mathrsfs}

\newtheorem{theorem}{Theorem}

\newtheorem{remark}[theorem]{Remark}

\newtheorem{lemma}[theorem]{Lemma}

\usepackage{cleveref}



\title{A simple proof of exponential decay for the near-critical planar Ising model}


 \author{Jianping Jiang}
\address{Jianping Jiang \\ Yau Mathematical Sciences Center, Tsinghua University, Beijing 100084, China.}
\email{jianpingjiang@tsinghua.edu.cn}

\author{Frederik Ravn Klausen}
\address{Frederik Ravn Klausen \\ University of Cambridge, DPMMS, Cambridge, United Kingdom}
\email{frk23@cam.ac.uk}

\begin{document}
\begin{abstract}
    For the Ising model defined on $a\mathbb{Z}^2$ at critical temperature with external field $a^{15/8}h$, we give a simple and elementary proof that its truncated two-point function decays exponentially. The proof combines the high temperature expansion, random-cluster and random current representations.
    A new input in the proof is that, in the near-critical sourceless single current measure, there are many loops formed by a path on $a\mathbb{Z}^2$ with diameter of order $1$, together with two external edges that connect the path's endpoints to the ghost.
\end{abstract}

\maketitle

\vspace{-0.8cm}
\section{Introduction and main result}
The two-dimensional near-critical Ising model in the presence of a nonzero external field is widely regarded as particularly challenging to study. Nevertheless, Zamolodchikov proposed a solution directly in the scaling limit and made remarkable predictions about the particle spectrum, including the existence of eight particles whose masses are related to the Lie algebra $E_8$ \cite{Zam89a, Zam89b}. On the mathematical side, the scaling limits of the critical and near-critical magnetization fields were established in \cite{CGN15, CGN16}. Afterwards, exponential decay was proved for this near-critical model in \cite{CJN20a}, implying that the corresponding field theory has a mass gap. The goal of this paper is to present an elementary and less technical proof of the exponential decay result. 
 
For any finite domain $\Lambda \subset \mathbb{R}^2$, we write $\Lambda^a:=a\mathbb{Z}^2 \cap \Lambda$ for its {\it a-approximation}. The Ising model at the inverse critical temperature $\beta_c$ on $\Lambda^a$ with external field $a^{15/8}h$ and free boundary conditions is the probability measure $\mu^f_{\Lambda,a,h}$ on $\{-1,+1\}^{\Lambda^a}$ defined by
\[\mu^f_{\Lambda,a,h}(\sigma) \propto \exp[\beta_c\sum_{uv}\sigma_u\sigma_v+a^{15/8} h\sum_{u\in \Lambda^a}\sigma_u],\]
where the first sum is over all nearest-neighbor edges $uv$ in $\Lambda^a$. Let $\langle \cdot \rangle^f_{\Lambda,a,h}$ be the expectation with respect to $\mu^f_{\Lambda,a,h}$ and $\langle \sigma_x; \sigma_y \rangle^f_{\Lambda,a,h}:=\langle \sigma_x \sigma_y \rangle^f_{\Lambda,a,h}-\langle \sigma_x\rangle^f_{\Lambda,a,h}\langle \sigma_y \rangle^f_{\Lambda,a,h}$ be the covariance. For any $N>0$, let $\Lambda_N:=[-N,N]^2$. For $x,y\in\mathbb{R}^2$, let $|x-y|$ denote the Euclidean distance between $x$ and $y$. Our main result is the following.
\begin{theorem}\label{thm:main}
	There exist $C_0, C_1 \in (0,\infty)$ such that for each $a\in(0,1]$,  $h\in(0,a^{-15/8}]$ and $N>0$,
	\[\langle \sigma_x; \sigma_y \rangle^f_{\Lambda_N,a,h}\leq C_0 a^{1/4}|x-y|^{-1/4}\exp[-C_1 h^{8/15}|x-y|],~\forall x\neq y\in \Lambda_N^a.\]
\end{theorem}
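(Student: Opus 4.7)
The plan is to reduce the theorem via switching to a ghost-disconnection event in the random current representation, and then to extract exponential decay at rate $h^{8/15}$ from the abstract's ``many ghost loops'' observation.

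\textbf{Step 1 (switching reduction).} I would encode the field by a ghost site $g$ adjacent to every $u\in\Lambda^a$ with coupling $\beta_g=a^{15/8}h$, so that $\mu^f_{\Lambda,a,h}$ becomes the critical ferromagnetic Ising measure on $\Lambda^a_*:=\Lambda^a\cup\{g\}$ and $\langle\sigma_u\rangle^f_{\Lambda,a,h}=\langle\sigma_u\sigma_g\rangle_{\Lambda^a_*}$. Aizenman's switching lemma applied to two independent currents $\mathbf n_1, \mathbf n_2$ with sources $\{x,y\}$ and $\emptyset$ on $\Lambda^a_*$ yields
\[
\langle \sigma_x;\sigma_y\rangle^f_{\Lambda,a,h}
=\langle \sigma_x\sigma_y\rangle^f_{\Lambda,a,h}\cdot
\mathbb P^{\{x,y\},\emptyset}_{\Lambda^a_*}\!\bigl[x\nleftrightarrow g\text{ in }\mathbf n_1+\mathbf n_2\bigr].
\]
Passing to the FK-Ising representation on $\Lambda^a_*$ via Edwards--Sokal and a short bookkeeping argument, the left side is dominated by the critical ($h=0$) FK connectivity probability $\phi^f_{\Lambda^a_*,h=0}[x\leftrightarrow y]\le Ca^{1/4}|x-y|^{-1/4}$ multiplied by the current ghost-disconnection probability, so the polynomial prefactor is reduced to the standard critical two-point estimate, and it remains to prove an exponential bound on the current probability.

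\textbf{Step 2 (box estimate).} Set $\ell:=\kappa h^{-8/15}$. For any box $B$ of side $\ell$, call a \emph{ghost loop} any pair of ghost edges $\{u,g\},\{v,g\}$ in the sourceless current $\mathbf n_2$ with $u,v\in B\cap a\mathbb Z^2$ joined by a current path inside $B$. Expanding the two ghost edges through the high-temperature expansion, the expected number of ghost loops in $B$ reduces, up to constants, to $(a^{15/8}h)^2\sum_{u,v\in B\cap a\mathbb Z^2}\langle\sigma_u\sigma_v\rangle$, which by the critical two-point bound is of order $(h\ell^{15/8})^2$, hence $\ge 2$ once $\kappa$ is large. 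A matching second-moment bound using the critical Ising four-point function (controlled uniformly in $a$ via random-cluster/current Wick-type inequalities) and the Paley--Zygmund inequality then delivers a universal constant $p>0$ with $\mathbb P[B\text{ contains a ghost loop in }\mathbf n_2]\ge p$. This is exactly the loop-density observation highlighted in the abstract.

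\textbf{Step 3 (iteration).} Cover a thin corridor from $x$ to $y$ by $k\asymp |x-y|/\ell$ disjoint boxes $B_1,\dots,B_k$, and in each $B_i$ let $\mathcal E_i$ be the event that a ghost loop in $B_i$ lies entirely inside the $x$-cluster of $\mathbf n_1+\mathbf n_2$. If $x\leftrightarrow y$ and $x\nleftrightarrow g$ in $\mathbf n_1+\mathbf n_2$, then no $\mathcal E_i$ can occur, for otherwise $x$ would be connected to $g$ through the loop's ghost edges. FKG on $\mathbf n_1+\mathbf n_2$ (or iterative conditioning on the configuration outside $B_i$, combined with Step 2) then gives $\mathbb P[\mathcal E_i^c\mid\mathcal E_1^c,\dots,\mathcal E_{i-1}^c]\le 1-p'$, whence $\mathbb P[x\nleftrightarrow g]\le (1-p')^k\le\exp(-C_1 h^{8/15}|x-y|)$. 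Combined with Step 1 this proves Theorem~\ref{thm:main}.

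\textbf{Main obstacle.} The hardest step is the second-moment bound in Step 2: it requires a quantitative upper bound on the critical planar Ising four-point function, essentially in Wick form and uniform in $a$, produced from random-cluster or random-current inputs rather than imported from CFT. A closely related subtlety is arguing that the ghost loop found by the moment argument is truly macroscopic (of diameter comparable to $\ell$), not collapsed to two ghost edges at the same lattice site, so that it genuinely disconnects $x$ from $y$ modulo $g$; this is the ``path of diameter of order $1$'' in the abstract. The rest is fairly standard FK/current surgery.
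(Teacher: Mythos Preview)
Your outline is on the right track at the level of the switching reduction and the ``many ghost loops'' mechanism, but Step~3 contains a genuine gap that the paper's proof is largely devoted to closing. You invoke ``FKG on $\mathbf n_1+\mathbf n_2$ (or iterative conditioning on the configuration outside $B_i$)'' to turn the box estimate into a product bound. Neither tool is available: the random current measure is \emph{not} positively associated, and the sourceless current has no useful domain Markov property---conditioning on the configuration outside $B_i$ does not reduce to a current measure on $B_i$ with boundary conditions. Even if FKG held, the events $\mathcal E_i^c$ are decreasing, so FKG would give the inequality in the wrong direction. This is exactly why the paper does \emph{not} try to get independence directly in the current: it first establishes the analogue of $\mathcal E_i$ in the random-cluster model (Lemma~\ref{lem:FK}), where FKG and domain Markov are legitimate and give stochastic domination by i.i.d.\ Bernoulli's; then it transfers to the loop~$O(1)$ measure via the uniform-even-subgraph coupling, using a spanning-tree cycle basis so that each ghost loop corresponds to an independent fair coin (the XOR-trick); and only then passes to $\mathbf P^\emptyset$ by the stochastic domination $\ell\le_{\mathrm{st}}\mathbf P^\emptyset$. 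This chain of couplings is the heart of the argument and is entirely missing from your proposal.

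A second, smaller gap: your event $\mathcal E_i$ asks that the ghost loop lie ``inside the $x$-cluster of $\mathbf n_1+\mathbf n_2$'', but you give no mechanism forcing this. The paper makes this automatic by a topological trick: the rectangles $R_i$ are chosen so that the $\mathbf n_1$-path $\gamma$ crosses each $R_i$ in the \emph{short} direction, while the ghost loop in $E(R_i)$ is required to cross in the \emph{long} direction; the two crossings must intersect. Your boxes have no such orientation built in, and a ghost loop that happens to sit in a corner of $B_i$ need not meet the $x$-cluster at all. Finally, the paper works at a fixed unit scale with $p_h$ depending on $h$ and recovers the $h^{8/15}$ rate by a rescaling at the end; your choice $\ell=\kappa h^{-8/15}$ is equivalent in spirit but would require the RSW and one-arm inputs to hold uniformly up to that scale, which is what \eqref{eq:nearcriticalRSW} and \eqref{eq:onearm} provide.
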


By sending $N\rightarrow \infty$, we see that the exponential decay also holds in the infinite-volume limit. This decay was first proved in \cite{CJN20a} using random-cluster representation together with conformal loop and conformal measure ensembles. Subsequently, an alternative proof was given in \cite{KR22} using random currents and random-cluster representations. As in \cite{CJN20a}, the discrete result in Theorem \ref{thm:main} can be readily transferred to exponential decay in the continuum limit. We expect that the new proof presented here will further advance the rigorous study of the particle spectrum initiated in \cite{camia2020gaussian,camia2021gaussian}.

We outline the main ideas of the proof. By the random current representation, see \eqref{eq:truncatedtwopoint} below, $\langle \sigma_x; \sigma_y \rangle^f_{\Lambda,a,h}$ is bounded above by the probability of no connection between $x$ and $\mathfrak{g}$ in the double current measure $\mathbf{P}^{xy}_{\Lambda,a,h}\times\mathbf{P}^{\emptyset}_{\Lambda,a,h}$. The source constraint in $\mathbf{P}^{xy}_{\Lambda,a,h}$ forces the existence of a path from $x$ to $y$. This path must cross at least $c|x-y|$ disjoint rectangles of size $3\times 6$ in the easy direction (see Figure \ref{fig:proofidea}). A crucial new ingredient is that, under $\mathbf{P}^{\emptyset}_{\Lambda,a,h}$, the number of rectangles that contain a loop crossing the rectangle along its long direction and connecting to $\mathfrak{g}$ behaves like a binomial distribution with parameters $c|x-y|$ and some $p>0$. This in turn yields the desired exponential decay.

\begin{figure}
	\begin{center}
		\includegraphics{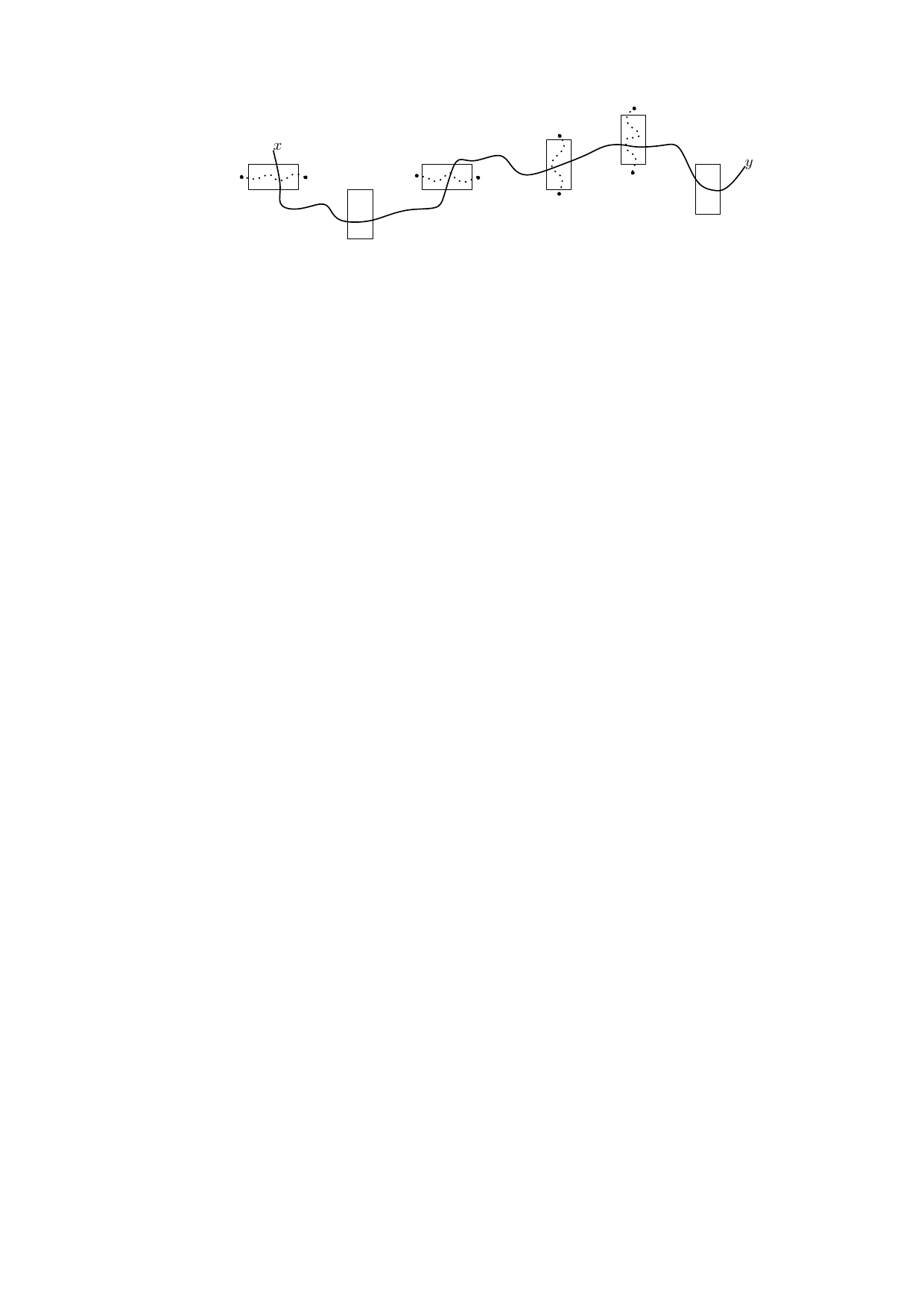}
		\caption{The curve illustrates the path from $x$ to $y$ forced by the source constraint in $\mathbf{P}^{xy}_{\Lambda,a,h}$. Each dotted curve represents a loop from $\mathbf{P}^{\emptyset}_{\Lambda,a,h}$ (only the path on $a\mathbb{Z}^2$ is shown and the two marked endpoints of the path are connected to $\mathfrak{g}$ directly). }\label{fig:proofidea}
	\end{center}
\end{figure}

To show that $\mathbf{P}^{\emptyset}_{\Lambda,a,h}$ contains the desired loops, we first establish their existence in the random-cluster measure $\phi_{\Lambda,a,h}$ using near-critical RSW and one-arm estimates. A positive fraction of these loops persist in the loop $O(1)$ measure $\ell_{\Lambda,a,h}$ since the uniform even subgraph of $\omega \sim \phi_{\Lambda,a,h}$ is distributed as $\ell_{\Lambda,a,h}$. Finally, loops in $\ell_{\Lambda,a,h}$ also appear in $\mathbf{P}_{\Lambda,a,h}^{\emptyset}$ due to the stochastic domination $\ell_{\Lambda,a,h} \leq_{s.t.} \mathbf{P}_{\Lambda,a,h}^{\emptyset}$.


\section{Proof of the main result}
\subsection{Graphical representations of the Ising model}
In this subsection, we briefly introduce the three graphical representations and some of their properties that will be used in our proof.

Consider a finite subgraph $G=(V,E)$ of $a\mathbb{Z}^2$. To deal with the external field, it is convenient to add an extra vertex $\mathfrak{g}$ (usually called the {\it ghost} vertex) and add an edge $z\mathfrak{g}$ for each $z\in V$. So the resulting new graph is $\overline{G}=(\overline{V}, \overline{E})$ with $\overline{V}:=V\cup\{\mathfrak{g}\}$ and $\overline{E}:=E\cup \{z\mathfrak{g}: z\in V\}$. The edges in $a\mathbb{Z}^2$ are called {\it internal edges}, while those in $\{z\mathfrak{g}: a\in a\mathbb{Z}^2\}$ are called {\it external edges}.
Recall that the coupling constant $J_{uv}=\beta_c$ for each nearest-neighbor edge $uv$ in $a\mathbb{Z}^2$. We set $J_{z\mathfrak{g}}:=a^{15/8}h$ for each $z\in a\mathbb{Z}^2$. 
\subsubsection*{Random current representation}
For ${\bf n}\in \Omega_{\overline{G}}:=\{0,1,2,\dots\}^{\overline{E}}$, the set of sources of ${\bf n}$ is defined by
\[\partial {\bf n}:=\{x\in \overline{V}: \sum_{y \in \overline{V}} {\bf n}_{xy} \text{ is odd}\}.\]
The random current measure with sources $A\subset \overline{V}$ is the probability measure
\[\mathbf{P}^A_{G,a,h}({\bf n}) \propto w({\bf n}):=\prod_{e \in \overline{E}} \frac{J_e^{{\bf n}_{e}}}{{\bf n}_e !},~ \forall {\bf n}\in \Omega_{\overline{G}} \text{ with }\partial {\bf n} =A.\]

One important tool in the study of random current representation is the switching lemma (see Lemma~3.2 of \cite{Aiz82} or Lemma 2.1 of \cite{ABF87}), which yields the following immediately, 
\begin{equation}\label{eq:truncatedtwopoint}
	\langle \sigma_x; \sigma_y \rangle^f_{G, a,h}=\langle \sigma_x \sigma_y \rangle^f_{G,a,h} \mathbf{P}^{xy,\emptyset}_{G,a,h}\left(x \overset{{\bf n} +{\bf m}} {\centernot \longleftrightarrow} \mathfrak{g}\right),
\end{equation}
where $\mathbf{P}^{xy,\emptyset}_{G,a,h}$ is the product of two independent random current measures $\mathbf{P}^{xy}_{G,a,h}$ and $\mathbf{P}^{\emptyset}_{G,a,h}$. An edge $e$ in ${\bf n}\in \Omega_{\overline{G}}$ is {\it open} if ${\bf n}_e\geq 1$ and {\it closed} if ${\bf n}_e=0$; $x \overset{{\bf n} +{\bf m}} {\centernot \longleftrightarrow} \mathfrak{g}$ means that there is no path from $x$ to $\mathfrak{g}$ using open edges in ${\bf n}+{\bf m}$ (here $({\bf n}+{\bf m})_e={\bf n}_e+{\bf m}_e$).

\subsubsection*{High-temperature expansion}
The high-temperature expansion of the Ising model (or loop $O(1)$ model) is the probability measure on the set of even subgraphs of $\overline{G}$:
\[\ell_{G,a,h}(F) \propto \prod_{e\in F} \tanh(J_e),~\forall F\subset \overline{E} \text{ with }\partial F=\emptyset,\]
where $\partial F$ is the set of $v\in \overline{V}$ with odd degree in the subgraph $(\overline{V},F)$.

\subsubsection*{The random-cluster model}
Let $\partial G:=\{x\in V: x \text{ has a nearest-neighbor in }a\mathbb{Z}^2\setminus V \}$ and $\partial \overline{G}:=\partial G\cup\{\mathfrak{g}\}$. The {\it boundary conditions} $\xi$ on $\overline{G}$ are given by a partition of $\partial\overline{G}$. Two vertices of $\partial\overline{G}$ are {\it wired} together if they belong to the same element of $\xi$. The random-cluster model on $\overline{G}$ with boundary conditions $\xi$ is the probability measure on $\{0,1\}^{\overline{E}}$ defined by
\[\phi^{\xi}_{G,a,h}(\omega) \propto \prod_{e\in \overline{E}}(1-e^{-2J_e})^{\omega_e}(e^{-2J_e})^{1-\omega_e}2^{\kappa(\omega^{\xi})}.\]
Here, $\omega$ is viewed as the subgraph $(\overline{V},\{e\in \overline{E}: \omega_e=1\})$; $\omega^{\xi}$ denotes the graph obtained from $\omega$ by identifying the wired vertices according to $\xi$; and $\kappa(\omega^{\xi})$ denotes the number of connected components of $\omega^{\xi}$. We use $0$ for the {\it free} boundary conditions (i.e., no wiring at all) and $1$ for the {\it wired} boundary conditions (i.e., all $\partial \overline{G}$ are wired together).

\subsubsection*{Couplings}  
The Ising measure $\mu^f_{G,a,h}$ and the random-cluster measure $\phi^0_{G,a,h}$ are related by \cite{FK72,ES88},
\begin{equation}\label{eq:ES}
	\langle \sigma_x \sigma_y \rangle_{G,a,h}^f=\phi^0_{G,a,h}(x\longleftrightarrow y).
\end{equation}
 $\ell_{G,a,h}$ and $\phi^0_{G,a,h}$ are coupled as follows \cite{EEV02,GJ09}: If $\omega \sim \phi^0_{G,a,h}$, then the uniform even subgraph of $\omega$ has the distribution of $\ell_{G,a,h}$.  
 
In addition, the open edges of $\mathbf{P}_{G,a,h}^{\emptyset}$ have the distribution given by first sampling $F \sim \ell_{G,a,h}$  and then adding to $F$ each $e\in \overline{E}\setminus F$ with probability $1-\sech(J_e)$ independently of each other  (see Remark 3.4 of \cite{DC18} and \cite{LW16}). For additional couplings and an overview of these constructions, see \cite{HJK25}.

The last ingredients we need are a near-critical RSW estimate and a one-arm estimate for $\phi^1_{G,a,h}$. Let $T:=[0,10]\times [0,3]$ and $S:=[1,9]\times [1,2]$. $a\mathbb{Z}^2$ is a planar graph, so we can define its dual edges; we refer to \cite{Gri06} for the exact definition but only mention that a dual edge is open if and only if the corresponding primal edge is closed.  Lemma 8.5 of \cite{DCM22}, whose proof is only about one page long, implies that there exist $c_0,C_1,C_2,h_0\in(0,\infty)$ such that for all $a\in(0,1]$ and $h\in [0,h_0]$,
\begin{equation}\label{eq:nearcriticalRSW}
	\phi^1_{T,a,h}(\exists \text{ dual open circuit in }T^a\setminus S^a \text{ surrounding }S^a)>c_0,
\end{equation}
\begin{equation}\label{eq:onearm}
	C_1(a/r)^{1/8} \leq \phi^1_{\Lambda_{r},a,h}(0 \overset{a\mathbb{Z}^2}{\longleftrightarrow} \partial \Lambda^a_{r}) \leq C_2 (a/r)^{1/8},~\forall r\in(0,1/2], \tag{1-arm}
\end{equation}
where the lower bound of the last inequality is actually due to the stochastic domination $\phi^1_{\Lambda_{r},a,h} \geq _{s.t.} \phi^1_{\Lambda_{r},a,0}$ and Lemma 5.4 of \cite{DCHN11}; see also Lemma 6 and Proposition 7 of \cite{CJN20b} for similar results (with longer proofs).

\subsection{Proof of Theorem \ref{thm:main}}
\begin{proof}[Proof of Theorem \ref{thm:main}]
   We claim that there is $h_0\in(0,\infty)$ such that for each $a\in(0,1]$ and $h\in(0,h_0]$, there exist $B(h), m(h)\in(0,\infty)$ satisfying
	\begin{equation}\label{eq:claim}
		\mathbf{P}^{xy,\emptyset}_{\Lambda_N,a,h}\left(x \overset{{\bf n} +{\bf m}} {\centernot \longleftrightarrow} \mathfrak{g}\right)\leq B(h)\exp[-m(h)|x-y|],~\forall x,y\in \Lambda_N^a \text{ with }|x-y|>2\sqrt{2},~\forall N>1.
	\end{equation}

    For the proof of the claim, we fix $N>1$ and write $\Lambda=\Lambda_N$. 
	By the source constraint, $\mathbf{P}^{xy}_{\Lambda,a,h}$ contains a path (say $\gamma$) from $x$ to $y$ in $\Lambda$. By Lemma \ref{lem:numberofdisjointR} in the appendix, $\gamma$ must cross at least $n\geq \lfloor |x-y| \rfloor/18$ rectangles of size $6\times 3$ in the easy direction, and these rectangles are of $l^{\infty}$-distance at least $6$ from each other and at least $3$ from $\partial \Lambda$. Those rectangles are denoted by $R_1,\dots,R_n$. Then the corresponding enlarged rectangles $T_i$, defined as in Figure \ref{fig:Looptog} with appropriate translations and rotations, all lie in $\Lambda$.
	
	\begin{figure}
		\begin{center}
			\includegraphics{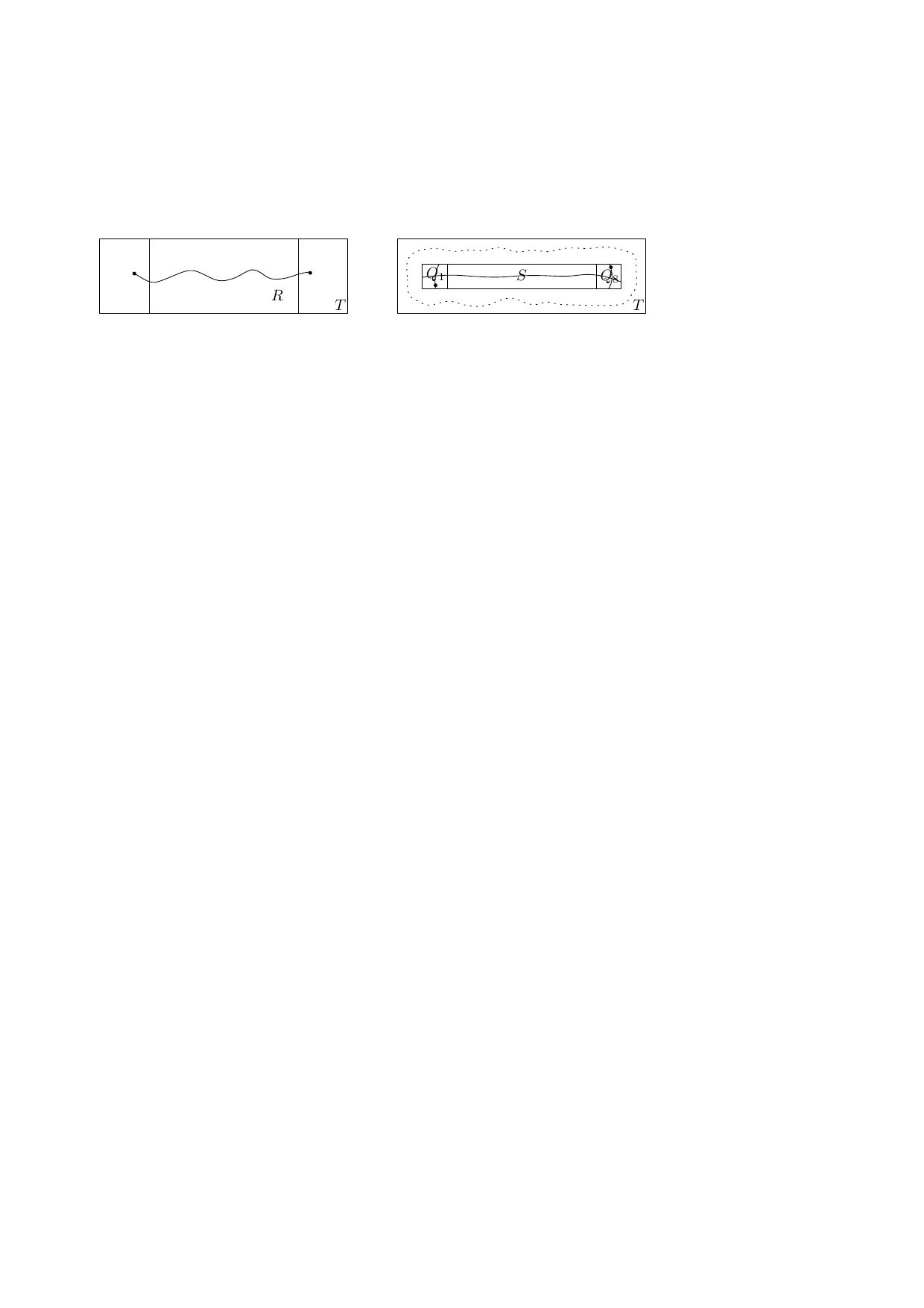}
			\caption{An illustration of the events $E(R)$ (left) and $H(R)$ (right). Here $T:=[0,10]\times[0,3]$, $R:=[2,8]\times [0,3]$, $S:=[1,9]\times[1,2]$, $Q_1:=[1,2)\times [1,2]$ and $Q_8:=(8,9]\times[1,2]$. The dotted circuit is a dual open circuit in $T^a\setminus S^a$. $S^a$ has a left-right crossing. Both $Q^a_1$ and $Q^a_8$ have a top-bottom crossing. In the crossing cluster of $S^a$ only the two marked points in $Q^a_1$ and $Q^a_8$ are connected to the ghost directly by one external edge. }\label{fig:Looptog}
		\end{center}
	\end{figure}
	
	Let $R$, $S$, $T$, $Q_1$ and $Q_8$ be defined as in Figure \ref{fig:Looptog}. $E(R)$ denotes the event that there is an open path within $T^a$ crossing $R^a$ in the long direction, and that this path is connected to $\mathfrak{g}$ (at least) in each of the two connected components of $T^a\setminus R^a$. $H(R)$ denotes the (random-cluster) event that there exists a dual open circuit in $T^a\setminus S^a$ surrounding $S^a$, and that $S^a$ contains an internal path crossing $S^a$ in the long direction, which is connected to $\mathfrak{g}$ by exactly two external edges whose endpoints (other than $\mathfrak{g}$) lie in $Q^a_1$ and $Q^a_8$, respectively. Lemma \ref{lem:FK} below implies that
	\[\phi^{\xi}_{T,a,h}(H(R))>p_h>0,~\forall \text{ boundary conditions }\xi.\]
	
	By the Domain Markov Property, under the random-cluster measure $\phi_{\Lambda,a,h}^0$, $\mathbf{1}[H(R_i)]$ for $i=1,\dots, n$ stochastically dominates a family of independent Bernoulli($p_h$) random variables.

 Call a configuration $\omega$ {\it good} if $\sum_{i=1}^n \mathbf{1}[E(R_i(\gamma))](\omega) \geq np_h/2$. Then Hoeffding's inequality gives
    \[\phi_{\Lambda,a,h}(\{\omega: \omega \text{ good}\}) \geq 1-2\exp[-np_h^2/2].\]
    The coupling of $\ell_{\Lambda,a,h}$ and $\phi_{\Lambda,a,h}$ says that the uniform even subgraph of $\omega \sim \phi_{\Lambda,a,h}$ (denoted by $\text{UEG}_{\omega}$) has the distribution of $\ell_{\Lambda,a,h}$. Hence,
    \[\ell_{\Lambda,a,h}(\cap_{i=1}^n E(R_i)^c)\leq \sum_{\omega \text{ good }}\text{UEG}_{\omega}(\cap_{i=1}^n E(R_i)^c)\phi_{\Lambda,a,h}(\omega)+2\exp[-np_h^2/2].\]


    For a fixed good $\omega$, by relabeling we may assume that $H(R_i)$ occurs in $\omega$ for $i=1,\dots,k:= \lfloor np_h/2 \rfloor $. For each $1\leq i \leq k$, let $\eta_i$ be a loop formed by the two open external edges from $H(R_i)$ together with a path in $T_i$ connecting them. One can construct a spanning forest $\mathscr{F}$ of $\omega$ that contains $\eta_i\setminus \{e_i\}$, where $e_i$ is one of the two external edges of $\eta_i$. For each $e\in\omega\setminus\mathscr{F}$, there is a unique loop $l_e$ consisting of $e$ together with edges in $\mathscr{F}$. Then $\{l_e: e\in \omega \setminus \mathscr{F}\}$ is a cycle basis. By Proposition 2.2 of \cite{GJ09}, the random symmetric difference $\sum_{e\in \omega\setminus \mathscr{F}} b_e l_e$ is distributed as $\text{UEG}_{\omega}$, where $b_e$'s are i.i.d. Bernoulli(1/2) random variables. 
    Note that $l_{e_i}=\eta_i$ for each $1\leq i \leq k$, and that whenever $b_{e_i}=1$, the event $E(R_i)$ occurs.
    Thus,
    \[\text{UEG}_{\omega}(\cap_{i=1}^n E(R_i)^c)\leq \mathbb{P}(b_{e_i}=0 \text{ for each } i \text{ such that } H(R_i) \text{ occurs in }\omega)\leq (1/2)^{\lfloor np_h/2 \rfloor}.\]
    Therefore, 
    \[\ell_{\Lambda,a,h}(\cap_{i=1}^n E(R_i)^c)\leq (1/2)^{\lfloor np_h/2 \rfloor}+2\exp[-np_h^2/2].\]
    The same inequality also holds for the measure $\mathbf{P}_{\Lambda,a,h}^{\emptyset}$ because of the stochastic domination $\ell_{\Lambda,a,h} \leq_{s.t.} \mathbf{P}_{\Lambda,a,h}^{\emptyset}$, and we conclude that
    \[\mathbf{P}^{xy,\emptyset}_{\Lambda,a,h}\left(x \overset{{\bf n} +{\bf m}} {\centernot \longleftrightarrow} \mathfrak{g}\right)\leq \max_{\gamma} \mathbf{P}^{\emptyset}_{\Lambda,a,h}\left(\cap_{i=1}^n E(R_i)^c\right) \leq (1/2)^{np_h/2}+2\exp[-np_h^2/2].\]

	The rest of the proof of the theorem is similar to that in \cite{CJN20a}. We claim that it suffices to prove the following: there is $h_0\in(0,1)$ such that for each $a\in(0,1]$ and $h\in(0,h_0]$, there exist $C_3(h),m(h)\in(0,\infty)$ satisfying
	\begin{equation}\label{eq:sufficient}
		\langle \sigma_x; \sigma_y \rangle_{\Lambda_N,a,h}^f \leq C_3(h)a^{1/4}|x-y|^{-1/4}\exp[-m(h)|x-y|],~\forall x\neq y\in \Lambda_N^a,~\forall N>0.
	\end{equation}
	This is because by the GHS inequality \cite{GHS70}, \eqref{eq:sufficient} can be extended from $h\in(0,h_0]$ to $(0,1]$; and then rephrasing \eqref{eq:sufficient} on the subset of $\mathbb{Z}^2$ gives that for all $H\in(0,1]$ and $N>0$,
	\[	\langle \sigma_x; \sigma_y \rangle_{H^{-8/15}\Lambda_N,1,H}^f \leq C_3(1)|x-y|^{-1/4}\exp[-m(1)H^{8/15}|x-y|],~\forall x\neq y\in (H^{-8/15}\Lambda_N) \cap \mathbb{Z}^2,\]
	which is equivalent to the statement in the theorem.

    The GHS inequality, GKS inequality \cite{Gri67,KS68} and Proposition 5.5 of \cite{DCHN11} imply that
	\begin{equation}\label{eq:shortdistance}
		\langle \sigma_x; \sigma_y \rangle_{\Lambda_N,a,h}^f \leq \langle \sigma_x; \sigma_y \rangle_{\Lambda_N,a,0}^f\leq \langle \sigma_x \sigma_y \rangle_{a\mathbb{Z}^2,a,0}^f\leq C_4 a^{1/4}|x-y|^{-1/4},~\forall x\neq y\in \Lambda_N^a.
	\end{equation}
	 This proves \eqref{eq:sufficient} for $x,y$ with $|x-y|\leq 2\sqrt{2}$.

  For $x,y$ with $|x-y|> 2\sqrt{2}$, let $\vec{h}$ be the external field set to $0$ in $\Lambda_1(x)\cup\Lambda_1(y)$ and otherwise unchanged. Then the GHS inequality and \eqref{eq:truncatedtwopoint} imply that
	\begin{equation}\label{eq:truncatedtwopoint1}
		\langle \sigma_x; \sigma_y \rangle_{\Lambda_N, a,h}^f \leq \langle \sigma_x; \sigma_y \rangle_{\Lambda_N, a,\vec{h}}^f = \langle \sigma_x \sigma_y \rangle_{\Lambda_N,a,\vec{h}}^f \mathbf{P}^{xy,\emptyset}_{\Lambda_N,a,\vec{h}}\left(x \overset{{\bf n} +{\bf m}} {\centernot \longleftrightarrow} \mathfrak{g}\right).
	\end{equation}
	By \eqref{eq:ES}, the FKG inequality and \eqref{eq:onearm},  for each $x,y\in \Lambda_N^a \text{ with }|x-y|>2\sqrt{2}$.
	\begin{align}\label{eq:twopoint}
		\langle \sigma_x \sigma_y \rangle_{\Lambda_N,a,\vec{h}}^f&=\phi^0_{\Lambda_N,a,\vec{h}}(x\longleftrightarrow y)\leq \phi_{\Lambda_{1}(x),a,h}^1(x\longleftrightarrow \partial \Lambda_{1}^a(x))\phi_{\Lambda_{1}(y),a,h}^1(y\longleftrightarrow \partial \Lambda_{1}^a(y))
		\leq (C_2a^{1/8})^2.
	\end{align}
	The proof of \eqref{eq:claim} still holds for $\mathbf{P}^{xy,\emptyset}_{\Lambda_N,a,\vec{h}}$ with the only possible change of $n$ to $n-2$ to exclude the two $R_i$ intersecting $\Lambda_1(x)\cup\Lambda_1(y)$.
    Hence, \eqref{eq:sufficient} for $x,y$ with $|x-y|>2\sqrt{2}$ follows from \eqref{eq:truncatedtwopoint1}, \eqref{eq:twopoint} and \eqref{eq:claim}.
\end{proof}

\begin{remark}
    The use of the uniform even graph in the proof can be viewed as an instance of the XOR-trick (XOR-ing with a loop through the ghost). Variants of this trick were used in the study of the Lorentz mirror model \cite{kozma2013mirror}, to show macroscopic loops in the loop $O(n)$ model \cite{crawford2025macroscopic} and for proving equality of the regimes of exponential decay for loop $O(1)$ and Ising in $\mathbb{Z}^d, d \geq 3$ \cite{hansen2025uniform}. 
\end{remark}

	\begin{lemma}\label{lem:FK}
		Recall the definition of $H(R)$ in the proof of Theorem \ref{thm:main} (see also Figure \ref{fig:Looptog}). There exists $h_0 \in (0,\infty)$ such that, for each $a\in(0,1]$ and $h\in(0,h_0]$, there exists $p_h\in(0,\infty)$ satisfying
		\[\phi_{T,a,h}^{\xi}(H(R))\geq p_h,~\forall \text{ boundary conditions }\xi.\]
	\end{lemma}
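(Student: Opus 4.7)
My plan is to lower-bound $\phi^{\xi}_{T,a,h}(H(R))$ uniformly in $\xi$ by constructing $H(R)$ as the intersection of a dual-circuit event and a primal/ghost-edge event, each occurring with uniform positive probability.

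First, the event $A := \{\exists \text{ dual open circuit in } T^a \setminus S^a \text{ surrounding } S^a\}$ is decreasing in the primal edge configuration. Since $\phi^{1}_{T,a,h}$ stochastically dominates $\phi^{\xi}_{T,a,h}$ for every $\xi$, the reversed inequality for decreasing events combined with \eqref{eq:nearcriticalRSW} yields $\phi^{\xi}_{T,a,h}(A) \geq \phi^{1}_{T,a,h}(A) \geq c_0$, uniformly in $\xi$, $a \in (0,1]$ and $h \in (0,h_0]$. I would then condition on the \emph{outermost} such dual circuit $\mathcal{C}$ and on the configuration outside $\mathcal{C}$. By the Domain Markov Property, all primal edges across $\mathcal{C}$ being closed forces the conditional distribution on the interior region $U^a \subset T^a$ (together with its external edges to $\mathfrak{g}$) to be a random-cluster measure whose internal edges carry free boundary conditions on $\partial_{\text{inner}} \mathcal{C}$; any ghost-induced wiring from outside only helps increasing events, so by the comparison inequality it suffices to prove the remainder of $H(R)$ with uniform positive probability under $\phi^{0}_{U^a \cup \{\mathfrak{g}\}, a, h}$.

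Inside $U^a$, the remaining task splits into two sub-steps. A primal near-critical RSW argument (provable by the same method as \eqref{eq:nearcriticalRSW}, e.g.\ via duality or direct application of Lemma~8.5 of \cite{DCM22}), combined with RSW in the thin rectangles near $Q_1$ and $Q_8$, gives with uniform positive probability a primal left-right crossing of $S^a$ whose endpoints lie in $Q^a_1$ and $Q^a_8$. For the external-edge part, I would condition on the primal configuration determining the crossing cluster; the external edges to $\mathfrak{g}$ from the cluster vertices are then, modulo the ghost-connectivity correction in the cluster count, approximately independent Bernoulli with parameter $1 - e^{-2a^{15/8}h}$. Since by \eqref{eq:onearm} the crossing cluster has of order $a^{-15/8}$ vertices, the number of open such edges is heuristically Poisson with mean of order $h$, so ``exactly two open external edges from the cluster, one in $Q^a_1$ and one in $Q^a_8$'' occurs with positive $h$-dependent probability.

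The hard part will be the last step: rigorously controlling the ``exactly two external edges on the crossing cluster'' condition uniformly in $a$. This will likely require a second-moment or explicit coupling argument to handle the correlations introduced by the random-cluster weight on $\mathfrak{g}$, together with geometric control on the crossing cluster via \eqref{eq:onearm} and RSW to guarantee both that open external edges can be placed precisely in $Q^a_1$ and $Q^a_8$ and that the cluster's other vertices can be simultaneously arranged to have their external edges closed.
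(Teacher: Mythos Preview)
Your proposal follows essentially the same route as the paper's proof: condition on the outermost dual circuit via \eqref{eq:nearcriticalRSW}, use RSW/one-arm inside to build the crossing cluster, and then control the external edges. The paper fills in your ``hard part'' exactly as you anticipate---Markov's inequality on the first moment bounds $|\mathscr{C}|$ from above, the Paley--Zygmund inequality (using a second-moment estimate) shows $|\mathscr{C}\cap Q^a_i|\gtrsim a^{-15/8}$ with positive probability, and the finite-energy property replaces your ``approximate Bernoulli'' heuristic with rigorous two-sided bounds on each external edge conditionally on the rest.
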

	\begin{proof}
    All crossing events in the proof refer to internal crossings (i.e., using only edges in $a\mathbb{Z}^2$).
        Let
		\begin{equation}\label{eq:E_1E_2}
			\begin{split}
				&E_1:=\{\exists \text{ dual open circuit in }T^a\setminus S^a \text{ surrounding }S^a\},\\
				&E_2:=\{S^a \text{ has LR open crossing}\}\cap \{Q^a_1 \text{ has TB open crossing}\} \cap \{Q^a_8 \text{ has TB open crossing}\},
			\end{split}
		\end{equation}
	where LR stands for left-right and TB stands for top-bottom. On $E_2$, there is a unique internal cluster that crosses $S^a$ in the long direction; denote this cluster by $\mathscr{C}$. We define
    \begin{equation}\label{eq:Ndef}
        \mathcal{N}:=|\mathscr{C}|\mathbf{1}[{E_2}], \mathcal{N}_1:=|\mathscr{C}\cap Q_1^a|\mathbf{1}[{E_2}], \mathcal{N}_8:=|\mathscr{C}\cap Q_8^a|\mathbf{1}[{E_2}].
    \end{equation}
    
    Lemma \ref{lem:moments} in the appendix, together with Markov's inequality, implies that for each $\epsilon>0$ there exists $M=M(\epsilon)\in(0,\infty)$ such that
	\begin{equation}\label{eq:Aevent}
		\phi_{T,a,h}^{\xi}(\mathcal{N}>Ma^{-15/8} | E_1) \leq \epsilon,~\forall \text{ boundary conditions }\xi.
	\end{equation}
	Lemma \ref{lem:moments} and the Paley-Zygmund inequality imply that
	\[\phi_{S,a,h}^0(\mathcal{N}_i >\lambda C_6 a^{-15/8})\geq (1-\lambda)^2 C_5^2/C_7,~\forall i\in\{1,8\},~ \forall \lambda\in(0,1).\]
	
	The occurrence of $E_1$ implies that there is an outermost dual open circuit in $T^a\setminus S^a$. By writing $E_1$ as a disjoint union of such outermost dual circuit (which only depends on internal edges outside of this circuit) and the FKG inequality, we have that for any boundary conditions $\xi$,
	\begin{align*}
		&\phi_{T,a,h}^{\xi}(\mathcal{N}_1 >\lambda C_6 a^{-15/8}, \mathcal{N}_8 >\lambda C_6 a^{-15/8}, E_1)
        \geq \phi_{S,a,h}^{0}(\mathcal{N}_1>\lambda C_6 a^{-15/8})\phi_{S,a,h}^{0}(\mathcal{N}_8>\lambda C_6 a^{-15/8})\phi_{T,a,h}^{\xi}(E_1).
	\end{align*}
	Combining the last two displayed inequalities, 
	\begin{equation}\label{eq:Bevent}
		\phi_{T,a,h}^{\xi}(\min\{\mathcal{N}_1, \mathcal{N}_8\}>\lambda C_6 a^{-15/8} \mid E_1)\geq [(1-\lambda)^2 C_5^2/C_7]^2=:c>0,
	\end{equation}

	We now choose $M_0>0$ so that $\epsilon_0<c/2$ in \eqref{eq:Aevent}, and fix any $\lambda_0\in(0,1)$. Then \eqref{eq:Aevent} and \eqref{eq:Bevent} imply that for any boundary conditions $\xi$,
	\begin{equation}\label{eq:clustersize}
		\phi_{T,a,h}^{\xi}(\mathcal{N} \leq M_0a^{-15/8},  \min\{\mathcal{N}_1, \mathcal{N}_8\}>\lambda_0 C_6 a^{-15/8} \mid E_1)>c/2.
	\end{equation}
	
	For $i=1,8$, define $G_i:=\{\exists \text{ exactly one vertex }v\in \mathscr{C}\cap Q^a_i \text{ with }v\mathfrak{g} \text{ open}\}$. Let $p:=1-\exp[-2ha^{15/8}]$. By the finite-energy property of the random-cluster model (see, e.g., Theorem 3.1 of \cite{Gri06}), the conditional probability that an external edge is open, given the states of all other edges, is bounded between $p/2$ and $p$. So
	\[\phi_{T,a,h}^{\xi}(G_i \mid \mathcal{N}_i=n_i, E_1)\geq n_i (p/2)(1-p)^{n_i-1}\geq n_ip(1-n_ip)/2,~i=1,8.\]
	On the other hand, for $n > 0$,
	\[\phi_{T,a,h}^{\xi}(v\mathfrak{g} \text{ is closed for each  }v \in \mathscr{C}\setminus(Q^a_1\cup Q^a_8) \mid \mathcal{N}=n, E_1)\geq (1-p)^n\geq 1-np.\]
	Since $G_1\cap G_8 \cap \{v\mathfrak{g} \text{ is closed for each  }v \in \mathscr{C}\setminus(Q^a_1\cup Q^a_8)\}\cap E_1 \cap E_2 \subset H(R)$, the last two displayed inequalities (which were obtained using the worst-case boundary conditions for each fixed external edge) imply that
	\begin{align*}
		&\phi_{T,a,h}^{\xi}(H(R) \mid \mathcal{N} \leq M_0a^{-15/8},  \min\{\mathcal{N}_1, \mathcal{N}_8\}>\lambda_0 C_6 a^{-15/8}, E_1)\\
		&\qquad \geq [\lambda_0 C_6a^{-15/8}p(1-M_0 a^{-15/8}p)/2]^2 \times[1-M_0a^{-15/8}p]>0 \text{ if } h \text{ is small and positive},
	\end{align*}
	where the last inequality can be shown using the inequality $x\leq 1-e^{-2x}\leq 2x$ if $x\in[0,1/2]$. This, \eqref{eq:clustersize} and \eqref{eq:nearcriticalRSW} complete the proof of the lemma.
	\end{proof}

\section*{Appendix}
In this Appendix, we establish two ancillary lemmas that are somewhat standard, but whose precise forms do not appear to be available in the literature; we therefore include complete proofs. The first lemma concerns the first and second moments of a typical near-critical cluster, and the second is about the number of disjoint rectangles crossed by a path.
\begin{lemma}\label{lem:moments}
	See Figure \ref{fig:Looptog} for the definition of $S$ and $T$, \eqref{eq:E_1E_2} for $E_1$ and $E_2$, and \eqref{eq:Ndef} for $\mathcal{N},\mathcal{N}_1,\mathcal{N}_8$.
	There exists $h_0>0$ such that for each $a\in(0,1]$ and $h\in(0,h_0]$, there exist $C_5,C_6,C_7\in(0,\infty)$ satisfying
	\begin{align*}
		&C_5 a^{-15/8}\leq \phi_{S,a,h}^0(\mathcal{N}_i)\leq C_6 a^{-15/8},~\phi_{S,a,h}^0(\mathcal{N}_i^2)\leq C_7 a^{-15/4} \text{ for }i=1,8.\\
		&\phi_{T,a,h}^{\xi}(\mathcal{N} \mid E_1)\leq C_6 a^{-15/8},~\forall \text{ boundary conditions }\xi.
	\end{align*}
\end{lemma}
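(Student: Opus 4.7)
The plan is to reduce each bound to the near-critical one-arm estimate \eqref{eq:onearm}, the RSW estimate \eqref{eq:nearcriticalRSW}, and the two-point function bound in \eqref{eq:shortdistance}, using the FKG inequality and the Domain Markov property of the random-cluster measure. For the first-moment upper bound, write $\mathcal{N}_i=\sum_{v\in Q_i^a}\mathbf{1}[v\in\mathscr{C}]$. Since $\mathscr{C}$ crosses $S^a$ in the long direction and $|Q_i|$ is bounded, $\{v\in\mathscr{C}\}$ forces $v$ to be internally connected to a point at Euclidean distance at least some fixed $\rho>0$. By the Domain Markov property and FKG monotonicity in boundary conditions (with an easy adaptation when $\Lambda_\rho(v)$ meets $\partial S$),
\[\phi^0_{S,a,h}(v\in\mathscr{C})\leq\phi^1_{\Lambda_{\rho}(v),a,h}(v\leftrightarrow\partial\Lambda_\rho^a(v))\leq C_2(a/\rho)^{1/8}\leq Ca^{1/8}\]
by \eqref{eq:onearm}. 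Summing over the $O(a^{-2})$ points in $Q_i^a$ gives $\phi^0_{S,a,h}(\mathcal{N}_i)\leq C_6 a^{-15/8}$. The same argument (with the sum taken over $v\in S^a\supset\mathscr{C}$) bounds $\phi^\xi_{T,a,h}(\mathcal{N}\mid E_1)\leq C_6 a^{-15/8}$: Domain Markov applied to the outermost open dual circuit in $E_1$ yields free internal boundary conditions inside the circuit, and the one-arm bound still applies.

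For the first-moment lower bound, I restrict to a sub-rectangle $Q'_i\subset Q_i$ at positive distance from $\partial S$, still containing $\Theta(a^{-2})$ vertices. I would show $\phi^0_{S,a,h}(v\in\mathscr{C})\geq c a^{1/8}$ for each such $v$ by combining: (i) the lower bound in \eqref{eq:onearm}, transferred from $\phi^1$ to $\phi^0$ via an RSW+FKG comparison in a box of order $1$ around $v$, producing an internal arm from $v$ to distance of order $1$; and (ii) FKG together with a chain of the RSW rectangles from \eqref{eq:nearcriticalRSW}, which with positive conditional probability extends this arm to a left-right crossing of $S$, placing $v$ in $\mathscr{C}$. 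Summing over $v\in{Q'_i}^a$ then yields $C_5 a^{-15/8}$.

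For the second-moment bound, I use $\phi^0_{S,a,h}(u,v\in\mathscr{C})\leq\phi^0_{S,a,h}(u\leftrightarrow v)=\langle\sigma_u\sigma_v\rangle^f_{S,a,h}$ (by \eqref{eq:ES}), and decompose $\langle\sigma_u\sigma_v\rangle=\langle\sigma_u;\sigma_v\rangle+\langle\sigma_u\rangle\langle\sigma_v\rangle$. The truncated term is bounded by $C_4 a^{1/4}/|u-v|^{1/4}$ via \eqref{eq:shortdistance} (after applying GKS to pass from $S$ to $\Lambda_N$), while $\langle\sigma_v\rangle^f_{S,a,h}=\phi^0_{S,a,h}(v\leftrightarrow\mathfrak{g})\leq C a^{1/8}$ by the previous one-arm argument (under wired boundary conditions, $\mathfrak{g}$ belongs to the same cluster as $\partial\Lambda_\rho^a$). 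Thus $\phi^0_{S,a,h}(u\leftrightarrow v)\leq C' a^{1/4}/|u-v|^{1/4}$ for $u,v\in Q_i^a$, and the standard estimate $\sum_{u,v\in Q_i^a}|u-v|^{-1/4}=O(a^{-4})$ yields $\phi^0_{S,a,h}(\mathcal{N}_i^2)\leq C_7 a^{-15/4}$. The main obstacle is the first-moment lower bound, specifically the transfer of the one-arm lower bound from $\phi^1$ to $\phi^0$ and the RSW routing of the resulting arm into the crossing cluster $\mathscr{C}$.
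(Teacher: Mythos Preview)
Your first-moment upper bounds and the bound on $\phi^\xi_{T,a,h}(\mathcal{N}\mid E_1)$ are exactly what the paper does. For the lower bound, the paper packages your steps (i)--(ii) into a single increasing event $E_3$ (existence of enough primal crossings in $Q_i^a$ so that any path of diameter $\ge 1/4$ meets the crossing cluster) and then applies FKG once; note that for this step one needs \emph{primal} RSW crossings (available at $h=0$ by critical RSW, then $h\ge 0$ by monotonicity), not the dual-circuit estimate \eqref{eq:nearcriticalRSW} you cite.

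Your second-moment argument is where you diverge, and it has a gap. The paper bounds $\phi^0_{S,a,h}(u\leftrightarrow v)$ directly by two internal one-arm events at scale $|u-v|$,
\[
\phi^0_{S,a,h}(u\leftrightarrow v)\le \phi^1_{\Lambda_{|u-v|/4}(u),a,h}\big(u\leftrightarrow \partial\Lambda^a_{|u-v|/4}(u)\big)\,\phi^1_{\Lambda_{|u-v|/4}(v),a,h}\big(v\leftrightarrow \partial\Lambda^a_{|u-v|/4}(v)\big)\le C(a/|u-v|)^{1/4},
\]
and then sums dyadically. Your detour through $\langle\sigma_u\sigma_v\rangle=\langle\sigma_u;\sigma_v\rangle+\langle\sigma_u\rangle\langle\sigma_v\rangle$ is unnecessary, and the justification you give for $\langle\sigma_v\rangle^f_{S,a,h}\le Ca^{1/8}$ does not work: wiring $\mathfrak{g}$ to $\partial\Lambda_\rho^a$ does \emph{not} reduce $\{v\leftrightarrow\mathfrak{g}\}$ to the internal arm event, since $v$ can reach $\mathfrak{g}$ through an open external edge inside $\Lambda_\rho(v)$ without ever touching $\partial\Lambda_\rho^a$. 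The bound $\langle\sigma_v\rangle^f_{S,a,h}\le C h\,a^{1/8}$ is true, but it requires a separate argument (e.g.\ GHS concavity in $h$ together with the critical two-point bound $\sum_w\langle\sigma_v\sigma_w\rangle^f_{S,a,0}\le Ca^{-7/4}$); simpler is to drop the Ising detour altogether and use the two one-arm events directly as the paper does.
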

\begin{proof}
    To simplify notation, all $\longleftrightarrow$ in the proof denote internal connection (i.e., using only edges in $a\mathbb{Z}^2$).
	Whenever $E_2$ occurs, each $x\in \mathscr{C}\cap Q^a_1$ must satisfy $x\longleftrightarrow \partial \Lambda_{1}^a(x)$. So, together with \eqref{eq:onearm},
	\[\phi_{S,a,h}^0(\mathcal{N}_1)\leq \phi_{S,a,h}^0\left(\sum_{x\in Q_1^a}\mathbf{1}[x\longleftrightarrow \partial \Lambda^a_{1}(x)]\mathbf{1}[E_2]\right)\leq \sum_{x\in Q_1^a}\phi_{S,a,h}^0(x\longleftrightarrow \partial \Lambda^a_{1}(x))\leq C_6 a^{-15/8}.\]
	
	The upper bound for $\phi_{T,a,h}^{\xi}(\mathcal{N} \mid E_1)$ follows in a similar way, using \eqref{eq:nearcriticalRSW}.
	
	To prove the lower bound, we denote by $E_3$ the event that there are enough LR and TB crossings of $Q^a_1$ such that any path in $Q_1^a$ with diameter $\geq 1/4$ hits one of those crossings. The critical RSW tells us that $\phi_{Q_1,a,0}^0(E_3)>0$. Let $D^a:=((1,1)+[1/4,3/4]^2)\cap a\mathbb{Z}^2$. Then by FKG and \eqref{eq:onearm}, 
	\begin{align*}
		\phi_{S,a,h}^0(\mathcal{N}_1) &\geq \phi_{S,a,h}^0(\mathcal{N}_1\mathbf{1}[E_3])\geq \phi_{S,a,h}^0\left(\sum_{x\in D^a}\mathbf{1}[x\longleftrightarrow\partial \Lambda_{1/4}^a(x)]\mathbf{1}[E_3]\right)\\
		&\geq \phi_{S,a,h}^0(E_3) \phi_{S,a,h}^0\left(\sum_{x\in D^a}\mathbf{1}[x\longleftrightarrow\partial \Lambda_{1/4}^a(x)]\right) \geq C_5 a^{-15/8}.
	\end{align*}

	
	For the second moment, using $\{x \longleftrightarrow \mathscr{C}, y\longleftrightarrow \mathscr{C}, E_2\}\subset \{x \longleftrightarrow y\}$,

\begin{align*}\phi_{S,a,h}^0(\mathcal{N}_1^2)\leq \sum_{x,y\in Q_1^a} \phi_{S,a,h}^0(x\longleftrightarrow y)\leq |Q_1^a|+\sum_{x\in Q_1^a}\sum_{k=0}^{\lceil -\log_2 a \rceil}\sum_{y\in Q_1^a: 2^{k-1}a\leq |y-x|<2^ka} \phi_{S,a,h}^0(x\longleftrightarrow y).
	\end{align*}
	Note that for all $x,y\in Q_1^a$ with $|y-x|\geq 2^{k-1}a$, FKG and \eqref{eq:onearm} give
	\[\phi_{S,a,h}^0(x\longleftrightarrow y)\leq \phi_{\Lambda_{2^{k-2}a}(x),a,h}^1(x\longleftrightarrow \partial \Lambda_{2^{k-2}a}^a(x))\phi_{\Lambda_{2^{k-2}a}(y),a,h}^1(y\longleftrightarrow \partial \Lambda_{2^{k-2}a}^a(y))\leq[C_2 2^{-(k-2)/8}]^2.\]
	  Plugging this into the last displayed inequality, we get
	\[\phi_{S,a,h}^0(\mathcal{N}_1^2)\leq C_7a^{-15/4}.\]
The proofs for the inequalities involving $Q_8$ are the same.	 
\end{proof}

\begin{lemma}\label{lem:numberofdisjointR}
	Fix $L,N>0$. Let $x,y\in \Lambda_N$, and $\gamma$ be any path in $\Lambda_N$ connecting $x$ to $y$. Then $\gamma$ must cross at least $\lfloor |x-y| /(6L)\rfloor$ rectangles of size $2L\times L$ in the easy direction, and all those rectangles are of $l^{\infty}$-distance at least $2L$ from each other and $l^{\infty}$-distance at least $L$ from $\partial \Lambda_N$.
\end{lemma}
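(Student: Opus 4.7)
The plan is a direct geometric construction: locate the required rectangles one per ``slab'' along the axis in which the displacement from $x$ to $y$ is largest, and control the transverse behavior of $\gamma$ by a dichotomy on the size of its transverse excursions. By the symmetries of $a\mathbb{Z}^2$ (reflections through the coordinate axes and the $90^\circ$ rotation), I may assume $y_1 - x_1 \ge |y_2 - x_2| \ge 0$, so $y_1 - x_1 \ge |x-y|/\sqrt{2}$. The factor $\sqrt{2}$ is either absorbed into the constant $6L$ or, more cleanly, recovered by placing additional rectangles along the second axis direction and using both orientations $2L \times L$ and $L \times 2L$; for clarity I focus on a single axis. For $i = 1,\dots, k := \lfloor |x-y|/(6L)\rfloor$ set $c_i := x_1 + 6L(i - \tfrac{1}{2})$ and let $S_i := [c_i - L/2, c_i + L/2]\times \mathbb{R}$ be the vertical strip of width $L$ centered at $\{z_1 = c_i\}$.

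\textbf{Strip sub-arc and dichotomy.} For each $i$, I extract a sub-arc $\sigma_i$ of $\gamma$ confined to $S_i$ by setting $\tau_i := \inf\{t : \gamma_1(t) = c_i + L/2\}$ (first time the first coordinate of $\gamma$ reaches the right boundary of $S_i$) and $\sigma_i^- := \sup\{s \le \tau_i : \gamma_1(s) = c_i - L/2\}$ (last previous visit to the left boundary); $\sigma_i := \gamma|_{[\sigma_i^-, \tau_i]}$ then lies in $S_i$ with one endpoint on each vertical boundary, and has some transverse range $[Y^-_i, Y^+_i]$. If $Y^+_i - Y^-_i \le 2L$, take $R_i := [c_i - L/2, c_i + L/2] \times [Y^-_i, Y^-_i + 2L]$, a rectangle of dimensions $L \times 2L$ that contains $\sigma_i$: then $\sigma_i$ connects its two short sides, giving the easy-direction crossing. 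Otherwise $Y^+_i - Y^-_i > 2L$, so I pick $\bar h_i$ with $[\bar h_i - L/2, \bar h_i + L/2] \subset [Y^-_i, Y^+_i]$; since $\sigma_i$ reaches both above and below this interval, there is a sub-sub-arc $\sigma_i'$ of $\sigma_i$ connecting the horizontal lines $\{y = \bar h_i - L/2\}$ and $\{y = \bar h_i + L/2\}$ while staying in the band $\{|y - \bar h_i| \le L/2\}$. This $\sigma_i'$ lives in $S_i \subset [c_i - L, c_i + L] \times \mathbb{R}$, so the $2L \times L$ rectangle $R_i := [c_i - L, c_i + L] \times [\bar h_i - L/2, \bar h_i + L/2]$ contains it; $\sigma_i'$ connects its two long sides vertically, once again giving the easy-direction crossing.

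\textbf{Separation and boundary.} In both cases $R_i \subset [c_i - L, c_i + L] \times \mathbb{R}$, so consecutive $R_i$'s have horizontal $l^\infty$-separation at least $6L - 2L = 4L \ge 2L$. The distance $L$ from $\partial \Lambda_N$ is arranged by discarding the at most $O(1)$ indices $i$ whose $R_i$ intersects the $L$-neighborhood of $\partial \Lambda_N$, a loss absorbed by the floor in $k$.

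\textbf{Main obstacle.} The one nontrivial point is the second case: a priori $\sigma_i$ may have arbitrary transverse extent, so one cannot enclose it in a fixed-size rectangle. The resolution is to reorient: large transverse excursions of $\sigma_i$ are themselves easy-direction crossings of $2L \times L$ rectangles whose long axis is perpendicular to that of the Case-A rectangles. This interchange of orientations turns the apparent obstacle into the precise mechanism that produces the required crossing.
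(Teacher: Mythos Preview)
Your dichotomy on the transverse range of the strip sub-arc is sound, and the horizontal separation you obtain ($6L-2L=4L$) is even better than required. But there are two genuine gaps.

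\textbf{Boundary.} The claim that at most $O(1)$ of the $R_i$ meet the $L$-neighbourhood of $\partial\Lambda_N$ is false. If $\gamma$ runs within distance $L$ of, say, the top side of $\Lambda_N$ across many consecutive strips $S_i$ (nothing prevents this), then in each such strip the rectangle you build --- in either branch of the dichotomy --- has its vertical extent forced up against the top side and cannot be shifted inward while still being crossed by the relevant sub-arc. So you may lose a positive fraction of the rectangles, not $O(1)$, and the floor does not absorb this. The paper deals with this upfront by first passing to a sub-arc $\gamma_1\subset\Lambda_{N-2L}$; every box $\Lambda_L(z_k)$ centred on $\gamma_1$ then automatically lies at distance $\ge L$ from $\partial\Lambda_N$.

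\textbf{Count.} The factor $\sqrt{2}$ is not ``absorbed into the constant $6L$'': that constant is fixed by the statement. With slabs at spacing $6L$ along a single axis and displacement only $\ge|x-y|/\sqrt{2}$, you produce roughly $|x-y|/(6\sqrt{2}\,L)$ rectangles, too few. Your alternative suggestion of also placing rectangles along the second axis is not carried out, and making the two families simultaneously $2L$-separated from each other would need its own argument. The paper instead spaces its vertical lines at $3L$ (not $6L$) and uses the cruder bound $\max(|y_1-x_1|,|y_2-x_2|)\ge|x-y|/2$, which yields exactly $\lfloor|x-y|/(6L)\rfloor$ crossings.

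On method: the paper's construction sidesteps your case split entirely. It records the first hitting point $z_k$ of each vertical line $\{z_1=3kL\}$ and observes that the continuation of the path must exit the box $\Lambda_L(z_k)$ through one of its four sides --- hence crosses one of the four $2L\times L$ half-boxes of $\Lambda_L(z_k)$ in its easy direction. No analysis of the transverse range is needed.
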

\begin{proof}
	Without loss of generality, we may assume that $N> 2L$ (otherwise $\lfloor |x-y| /(6L)\rfloor=0$) and that $\gamma$ contains a subpath $\gamma_1\subset \Lambda_{N-2L}$ whose horizontal displacement is at least $|x-y|/2$ (the case of a vertical displacement is handled similarly). Let $z_k$ be the first vertex at which $\gamma_1$ intersects the vertical line whose horizontal coordinate is $3kL$, for $k=m,m+1,\dots,n$, where $m,n\in\mathbb{Z}$. Then $n-m+1\geq \lfloor |x-y| /(6L)\rfloor$, and all $\Lambda_L(z_k)$'s are contained in $\Lambda_N$ and of $l^{\infty}$-distance at least $2L$ from each other. $\gamma_1$ contains a subpath in $\Lambda_L(z_k)$ from $z_k$ to $\partial \Lambda_L(z_k)$ and thus at least one of the following four rectangles is crossed by $\gamma_1$ in the easy direction: $z_k+[-L,0]\times [-L,L]$, $z_k+[0,L]\times [-L,L]$, $z_k+[-L,L]\times [-L,0]$ and $z_k+[-L,L]\times [0,L]$.
\end{proof}

\section*{Acknowledgments}
JJ is supported by National Natural Science Foundation of China (No. 12271284 and No. 12226001). FRK is supported by the Carlsberg Foundation, grant CF24-0466.
\vspace{-0.2cm}

\bibliographystyle{abbrv}
\bibliography{bibliography}

\end{document}